\newtheorem{prop}{Proposition}
\newtheorem{thm}[prop]{Theorem}
\newtheorem{cor}[prop]{Corollary}
\newtheorem{lem}[prop]{Lemma}
\theoremstyle{definition}
\newtheorem{que}[prop]{Question}
\newtheorem{expl}[prop]{Example}
\newtheorem{rem}[prop]{\it Remark}
\newtheorem*{claim*}{Claim}
\newtheorem{say}[prop]{}
\newcommand{\bP}{\mathbb{P}}
\newcommand{\bC}{\mathbb{C}}
\newcommand{\bZ}{\mathbb{Z}}
\newcommand{\bN}{\mathbb{N}}
\newcommand{\bG}{\mathbb{G}}
\newcommand{\cO}{\mathcal{O}}
\DeclareMathOperator{\Supp}{Supp}
\DeclareMathOperator{\rank}{rank}
\DeclareMathOperator{\ed}{ed}
\DeclareMathOperator{\tf}{tf}
\numberwithin{equation}{section}
\renewcommand{\o}[0]{{\mathcal O}} 
\newcommand{\z}[0]{{\mathbb Z}}
\newcommand{\p}[0]{{\mathbb P}}
\newcommand{\spec}[0]{\operatorname{Spec}}
\newcommand{\chr}[0]{\operatorname{char}}
\newcommand{\GL}{\mathrm{GL}}
\newcommand{\tsum}[0]{\textstyle{\sum}} 
\newcommand{\onto}[0]{\twoheadrightarrow}
\newcommand{\qtq}[1]{\quad\mbox{#1}\quad}
\newcommand{\alb}[0]{\operatorname{Alb}}
\title{Essential dimension of isogenies}
\author{J\'anos Koll\'ar}
\address{Department of Mathematics, Princeton University, Princeton, NJ 08544, USA}
\email{kollar@math.princeton.edu}
\author{Ziquan Zhuang}
\address{Department of Mathematics, Johns Hopkins University, Baltimore, MD 21218, USA}
\email{zzhuang@jhu.edu}
\date{}
\dedicatory{Dedicated to James M\textsuperscript{c}\!Kernan on the occasion of his sixtieth birthday.}
\begin{document}

\begin{abstract}
  We give a lower bound for the essential dimension of isogenies of complex abelian varieties. The bound is sharp in many cases. In particular, 
 the multiplication-by-$m$ map  is incompressible  for every $m\geq 2$,  confirming a conjecture of Brosnan.
\end{abstract}

\maketitle

\section{Introduction}

The {\it essential dimension} of a generically finite map $f\colon Y\to X$ is the smallest integer $m$ such that  $f$ is birational  to the pull-back  of a map of varieties of dimension $m$.
It is denoted by  $\ed(Y/X)$ or  $\ed(f)$. 
A map 
$f$ is called {\it incompressible} if
its  essential dimension is $\dim X$.

This notion was introduced in \cite{BR-ess-dim} and has been studied in a variety of contexts; see e.g. \cites{Rei-ess-dim-survey,Mer-ess-dim-survey} for surveys and \cite{FW-resolvent-degree} for its connection to an algebraic form of Hilbert's 13th problem through a variant call the resolvent degree. 

The purpose of this note is to describe a birational geometric approach to this invariant, leading to a proof of a conjecture of  Brosnan, recorded in  \cite{FS-incompressibility}*{Conjecture 6.1}.
Throughout we work over an  algebraically closed field of characteristic $0$.

\begin{thm} \label{cor:mult-by-p incompressible}
Let $A$ be an abelian variety. Then  the multiplication-by-$m$ map $m_A: A\to A$ is incompressible for  every $m\geq 2$. 
\end{thm}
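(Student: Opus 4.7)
\emph{Plan.} The plan is to argue by contradiction. Suppose $\ed(m_A) = n < g := \dim A$. Then there exist a variety $X'$ of dimension $n$, a generically finite morphism $\gamma : Y' \to X'$, a dominant rational map $\phi : A \dashrightarrow X'$, and a birational equivalence $\mu : A \dashrightarrow Y' \times_{X'} A$ of $A$-schemes, with $m_A$ on the source side and the second projection $\pi_2$ on the target (if the pullback is reducible, replace it by the component birational to $A$). Comparing generic fibers gives $\deg \gamma = \deg m_A = m^{2g}$.

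\emph{Step 1: Abelian factorization of $\phi$.} The first and most crucial step is to show that $\phi$ factors through a quotient of $A$ by a positive-dimensional abelian subvariety. After Stein factorization on the target, I would produce an abelian subvariety $B \subset A$ of dimension $g - n$ such that $\phi$ factors as $A \to A/B \dashrightarrow X'$ with the latter birational. The plan is to apply Mumford's rigidity lemma to the map
\[
\eta : F \times A \longrightarrow X', \qquad (p, a) \longmapsto \phi(a + p),
\]
where $F$ is a general (connected) fiber of $\phi$; since $\eta$ is regular and constant on $F \times \{0\}$ (equal to $\phi(F)$), rigidity forces $\eta(p, a) = \phi(a)$, so every $p \in F$ stabilizes $\phi$ under translation, and one takes $B := \langle F \rangle^{0}$. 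The hard part will be that $\phi$ is only rational, so $\eta$ is only a rational map on $F \times A$; the technical subtlety is to make rigidity applicable, for instance by choosing $F$ in the regular locus of $\phi$ (permitted since the indeterminacy has codimension $\geq 2$) and working with the graph closure of $\eta$. This is the main obstacle. The outcome is that the general fiber of $\phi$ is a single translate $F = a_0 + B$.

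\emph{Step 2: Preimages of a general fiber.} Set $x_0 = \phi(F) \in X'$. Since $mB = B$, we have $m_A^{-1}(B) = B + A[m]$; choosing $a_0' \in A$ with $m a_0' = a_0$ yields
\[
m_A^{-1}(F) \;=\; \bigsqcup_{t \in A[m]/B[m]} \bigl(a_0' + t + B\bigr),
\]
a disjoint union of $m^{2n}$ translates of $B$, each mapping to $F$ as a translate of the connected isogeny $m_B : B \to B$ of degree $m^{2(g-n)}$. On the other hand,
\[
\pi_2^{-1}(F) \;=\; \gamma^{-1}(x_0) \times F
\]
is a disjoint union of $m^{2g}$ copies of $F$, each mapping isomorphically onto $F$.

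\emph{Step 3: Contradiction.} Finally, for $F$ general, $\mu$ restricts to a birational equivalence of $F$-schemes $m_A^{-1}(F) \dashrightarrow \pi_2^{-1}(F)$. A birational equivalence preserves the number of irreducible components, giving $m^{2n} = m^{2g}$, which is impossible for $m \geq 2$ and $n < g$. Hence $\ed(m_A) \geq g = \dim A$; combined with the trivial inequality $\ed(m_A) \leq \dim A$, the map $m_A$ is incompressible.
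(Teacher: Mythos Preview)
Your Step~1 contains a genuine gap, and in fact its conclusion is false in the generality you state it. You claim that for any dominant rational map $\phi\colon A\dashrightarrow X'$ with connected general fibers, the general fiber is a translate of an abelian subvariety $B$ and $A/B\dashrightarrow X'$ is birational. But this fails whenever $X'$ is uniruled: for instance, embed $A\hookrightarrow\bP^N$ by a very ample line bundle and project linearly to $\bP^n$ with $0<n<\dim A$. The resulting $A\dashrightarrow\bP^n$ is dominant with connected general fibers (Bertini), yet the general fiber is a smooth linear section of $A$ with ample canonical class, hence of general type and certainly not a translate of an abelian subvariety. Rigidity cannot rescue this: Mumford's lemma genuinely requires a morphism, and your suggested fixes (choosing $F$ in the regular locus, passing to the graph closure) do not produce one, because for uniruled $X'$ the exceptional locus of the graph can dominate $X'$, so \emph{every} fiber meets the indeterminacy. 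You flag this as ``the main obstacle,'' and it is indeed fatal to the argument as written. Without Step~1 your component count in Step~3 collapses: one only gets that $m_A^{-1}(F)$ has $m^{2g}$ components (matching the right-hand side), not $m^{2n}$.

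This is exactly the difficulty the paper's proof is designed to handle. Rather than forcing $X'$ to be birationally abelian, the paper takes the MRC (equivalently, albanese) fibration $X'\to\alb(X')$, whose base \emph{is} an abelian quotient $A/B$ by Proposition~\ref{prop:MRC quotient abelian} and Corollary~\ref{prop:MRC quotient abelian.cor}, and whose fibers are rationally connected of some dimension $\dim F\ge 0$. The Galois group $A[m]$ acts on $X'$, the subgroup $B[m]$ acts on the rationally connected fiber $F$, and Corollary~\ref{cor:abelian action on RC} then gives $\dim F\ge\frac{p-1}{p}\rank_p B[m]=\frac{p-1}{p}\cdot 2\dim B$ for any prime $p\mid m$, whence
\[
\ed(m_A)=\dim X'=\dim A-\dim B+\dim F\ge \dim A-\dim B+\tfrac{2(p-1)}{p}\dim B\ge\dim A.
\]
In other words, the missing ingredient in your approach is precisely the bound on abelian $p$-group actions on rationally connected varieties; the counting argument alone cannot see the uniruled part of $X'$.
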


The first lower bounds on the   essential dimension of $m_A$ are in
\cite{FS-incompressibility}*{Theorem 6.4}.
Incompressibility is proved in  \cite{FKW-ess-dim-prismatic}, provided
 $m$ has a sufficiently large prime factor (depending on $A$). 
The proof uses a 
prismatic cohomology argument and also proves a stronger result, called
$p$-incompressibility.


Our main technical result is a lower bound for the essential dimension
of arbitrary isogenies. For an abelian group $G$ we let $\rank( G)$ denote the minimum number of generators, and set $\rank_p (G)=\rank (G/pG)$. 

\begin{thm} \label{thm:ess.dim.}
Let $\alpha\colon A\to A'$ be an isogeny of abelian varieties. Then there exists an abelian subvariety $B$ of $A$ such that, for every prime $p$,
\[
\ed(\alpha)\ge \dim A -\dim B +  \tfrac{p-1}{p}\rank_p(\ker(\alpha)\cap B).
\eqno{(\ref{thm:ess.dim.}.1)}
\]
\end{thm}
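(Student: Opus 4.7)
My plan is to extract the abelian subvariety $B$ from a compression realizing $\ed(\alpha)$, then establish the lower bound via a ramification analysis involving the $p$-torsion of $K \cap B$, where $K = \ker(\alpha)$.

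\textbf{Step 1 (Compression and equivariant setup).} Let $m = \ed(\alpha)$. By definition, $\alpha$ is birationally the pullback of a generically finite map $g \colon Z \to W$ with $\dim Z = \dim W = m$ along a rational map $h \colon A' \dashrightarrow W$. After passing to $K$-equivariant resolutions of singularities and indeterminacies, I may assume $Z$ and $W$ are smooth projective, that $g$ is a finite $K$-cover carrying the same torsor class as $\alpha$, and that the rational map $A \dashrightarrow Z$ is $K$-equivariant, where $K$ acts on $A$ and $Z$ via the torsor structures and trivially on $A'$ and $W$.

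\textbf{Step 2 (Choice of $B$).} The generic fiber $F \subseteq A'$ of $h$ is an irreducible subvariety of dimension $\dim A - m$. There is a smallest abelian subvariety $B' \subseteq A'$ containing a translate of $F$ (generated by the differences $F - f_0$ for any $f_0 \in F$); necessarily $\dim B' \geq \dim A - m$. Let $B \subseteq A$ be the identity component of $\alpha^{-1}(B')$. Then $B$ is an abelian subvariety with $\dim B = \dim B'$, $\alpha(B) = B'$, and $\ker(\alpha|_B) = K \cap B$.

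\textbf{Step 3 (Reformulation).} Setting $d := \dim B - (\dim A - m) \geq 0$, the inequality (\ref{thm:ess.dim.}.1) is equivalent to
\[
d \;\geq\; \tfrac{p-1}{p}\,\rank_p(K \cap B)
\]
for every prime $p$.

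\textbf{Step 4 (Main ramification inequality).} For a prime $p$, set $G := (K \cap B)[p] \cong \bF_p^{\,r}$ with $r = \rank_p(K \cap B)$, and let $Z_B \subseteq Z$ be the closure of the image of $B$, a $G$-invariant subvariety of dimension $d$. The $G$-action on $B$ is free (translations on an abelian variety), but the induced action on $Z_B$ can, and must, acquire codimension-one fixed loci on any $G$-equivariant smooth projective model $\widehat{Z_B}$. A log discrepancy computation on the quotient $\widehat{Z_B}/G$, exploiting the triviality $K_B = 0$ together with the standard ramification coefficient $\tfrac{p-1}{p}$ at a codimension-one $\bZ/p$ fixed divisor, yields the bound $d \geq \tfrac{p-1}{p}\, r$.

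The main obstacle is Step~4, the technical heart of the argument. The challenge is to convert the freeness of $G$ on $B$, via the triviality of $K_B$, into the presence of enough codimension-one ramification divisors on the $G$-equivariant model of $Z_B$, each with coefficient exactly $\tfrac{p-1}{p}$. This demands a careful equivariant birational analysis, ensuring that each of the $r$ independent $\bZ/p$ factors of $G$ contributes its own codimension-one fixed divisor and that the log canonical formula under $\bZ/p$-quotient produces precisely the factor $\tfrac{p-1}{p}$ (and not a weaker constant). Steps~1--2 are also nontrivial insofar as the $K$-equivariant resolution must be compatible with the abelian-variety structure needed to make sense of $B = \alpha^{-1}(B')^{\circ}$.
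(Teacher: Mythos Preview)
Your Steps~1--3 set things up reasonably, and your choice of $B$ is close in spirit to the paper's (the paper takes $B$ to be the connected kernel of the composite $A\dashrightarrow Y\to\alb(Y)$, which amounts to the same reformulation). The genuine gap is Step~4: the proposed ramification/log discrepancy mechanism cannot, by itself, produce the bound $d\ge\tfrac{p-1}{p}r$.

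The difficulty is not technical bookkeeping but a missing structural input. An abelian $p$-group $G\cong(\bZ/p)^r$ can act \emph{freely} on a smooth projective variety of dimension far smaller than $\tfrac{p-1}{p}r$: translations by $p$-torsion on an abelian variety of dimension $d$ give $r=2d$, with no ramification divisors at all and $K$ trivial on both source and quotient. So neither ``$G$ acts freely on $B$ with $K_B=0$'' nor ``codimension-one fixed loci must appear on $\widehat{Z_B}$'' is forced, and even when ramification divisors do exist there is no reason the $r$ cyclic factors contribute independent ones. A log-canonical formula for a $\bZ/p$-quotient only tells you the coefficient along each ramification divisor is $\tfrac{p-1}{p}$; it does not manufacture $r$ such divisors, nor does it bound $d$ from below.

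What the paper supplies in place of your Step~4 are two ingredients you do not invoke. First, a structure theorem for rational images of abelian varieties: the Albanese map of $Y$ (your $Z$) is surjective with \emph{rationally connected} general fiber $F$, and $\dim F=d$ in your notation. Second, a bound on abelian $p$-group actions specifically on rationally connected varieties: if $G$ acts faithfully on such an $F$ then $\rank(G)\le\tfrac{p}{p-1}\dim F$. This last inequality is proved not by ramification but by a Chern-number argument---the Todd class and Hirzebruch--Riemann--Roch force $\chi(F,\cO_F)=1$ to be divisible by a power of $p$ controlled by the smallest $G$-orbit, which in turn bounds the index of a stabilizer. The rational connectedness of $F$ is exactly what rules out the free-action counterexample above, and it is the step your outline is missing.
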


If $\alpha=m_A$ is  multiplication by $m$ and $p\mid m$, then
$\rank_p(\ker(\alpha)\cap B)=2\dim B$, so the right hand side of
(\ref{thm:ess.dim.}.1) is $\geq \dim A$, proving Theorem~\ref{cor:mult-by-p incompressible}.

We also  get a formula for the essential dimension of isogenies whose degree has no  small prime factors.

\begin{cor} \label{cor:formula}
Let $\alpha\colon A\to A'$ be an isogeny of abelian varieties whose degree is coprime to $(\dim A)!$. Then the essential dimension of $\alpha$ equals the minimum of 
\[
\dim A -\dim B + \rank(\ker(\alpha)\cap B),
\]
as $B$ runs through all abelian subvarieties of $A$.

\end{cor}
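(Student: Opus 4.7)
The plan is to combine the lower bound from Theorem~\ref{thm:ess.dim.} with a classical upper bound obtained via Kummer theory and the essential dimension of families.

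For the lower bound, I will apply Theorem~\ref{thm:ess.dim.} to obtain an abelian subvariety $B\subset A$ satisfying the inequality $(\ref{thm:ess.dim.}.1)$ for every prime $p$. Writing $G=\ker(\alpha)\cap B$ and $r=\rank(G)$, I will choose a prime $p$ dividing the smallest invariant factor of $G$, so that $\rank_p(G)=r$. The coprimality hypothesis on $\deg(\alpha)$ then forces $p>\dim A$, since $p$ divides $|G|$ which divides $\deg(\alpha)$. Substituting into the theorem gives $\ed(\alpha)\ge \dim A-\dim B+r(p-1)/p$. Since $r\le 2\dim B\le 2\dim A<2p$, we have $r/p<2$. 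Splitting into cases: if $r<p$, rounding up yields $\ed(\alpha)\ge \dim A-\dim B+r$; if $r\ge p$ (so in particular $r>\dim A\ge\dim B$), rounding yields $\ed(\alpha)\ge \dim A-\dim B+r-1\ge\dim A$. In either case $\ed(\alpha)$ is at least $\min_{B'}\bigl(\dim A-\dim B'+\rank(\ker(\alpha)\cap B')\bigr)$, since both $B$ and $\{0\}$ are admissible competitors in the minimum.

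For the upper bound, I will show that each abelian subvariety $B\subset A$ with $G=\ker(\alpha)\cap B$ of rank $r$ yields $\ed(\alpha)\le \dim A-\dim B+r$. The strategy is to view $\alpha$ as a family of isogenies over $T=A/B$, whose generic fiber is the isogeny $\alpha|_B\colon B\to\alpha(B)$ with kernel $G$. By Kummer theory in characteristic zero, any $G$-torsor is defined over a subfield of transcendence degree at most $\rank(G)=r$, giving $\ed(\alpha|_B)\le r$. Then the general inequality that essential dimension of a family is at most the dimension of the base plus the essential dimension of the generic fiber yields $\ed(\alpha)\le \dim T+\ed(\alpha|_B)\le \dim A-\dim B+r$. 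Minimizing over $B$ gives the upper bound, and combining with the lower bound proves the formula.

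The main obstacle is the case analysis in the rounding step of the lower bound: one must verify that the rounded right-hand side is at least the claimed minimum in both cases $r<p$ and $r\ge p$, the latter reducing to $\ed(\alpha)\ge\dim A$ by the coprimality-forced inequality $r\ge p>\dim A$. The upper bound is, by comparison, a routine combination of the Kummer-theoretic bound on the essential dimension of $G$-torsors and the standard family inequality, both classical.
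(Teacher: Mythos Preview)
Your lower bound argument is correct and essentially parallels the paper's, with a cosmetic difference: the paper first disposes of the case $\ed(\alpha)=\dim A$ (where $B'=\{0\}$ realizes the minimum) and then, assuming $\ed(\alpha)<\dim A$, derives $\rank_p(L)\le \tfrac{p}{p-1}(\dim B-1)<\dim B<p$ directly, so no case split on $r<p$ versus $r\ge p$ is needed. Your split achieves the same thing. One small omission: you should treat $G=\{0\}$ separately, since then there is no prime dividing the smallest invariant factor; but this case is trivial.

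The genuine gap is in your upper bound. The map $\alpha\colon A\to A'$ is \emph{not} a family over $T=A/B$: there is no natural morphism $A'\to A/B$, so the ``generic fiber of $\alpha$ over $A/B$'' is not defined. If you instead fiber over $A'/B'$ (which has the same dimension), the generic fiber of $\alpha$ is a $\ker(\alpha)$-cover, not a $G$-cover, and its essential dimension is a priori bounded only by $\rank(\ker(\alpha))$, which can exceed $r$. You would then be appealing to an inequality of the form $\ed(X/Z)\le \ed(X/Y)+\ed(Y/Z)$ for a tower $X\to Y\to Z$, and essential dimension is not subadditive in towers in general.

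The paper circumvents this as follows. Set $A_1=A/G$ and observe that $A_1\cong A'\times_{A'_0}A_0$ with $A_0=A/B$ and $A'_0=A'/\alpha(B)$, so $\ed(A_1/A')\le\dim A-\dim B$. The remaining step is not to bound $\ed(A/A_1)$ (which would reintroduce the tower problem) but to construct a second cover $A_2\to A'$, directly over $A'$, with $\rank(\ker(A_2/A'))=r$, such that $A_1\times_{A'}A_2$ dominates $A$. This is done by lifting the surjection $\pi_1(A_1)\twoheadrightarrow L$ to some $\pi_1(A')\twoheadrightarrow L'$ with $\rank(L')=r$. Then subadditivity for fiber products (not towers) gives $\ed(A/A')\le\ed(A_1/A')+\ed(A_2/A')\le(\dim A-\dim B)+r$. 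This lifting step is the missing idea in your sketch.
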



The starting point of our proof is the following  observation: if the isogeny $\alpha$ is birationally obtained from $Y\to Y'$ via base change, then the Galois group $G\cong \ker(\alpha)$ acts birationally and faithfully on  $Y$, which can be chosen as the image of the abelian variety $A$ by  a rational map with connected fibers.

In Section~\ref{sec.2} we show that such 
rational images of abelian varieties are quite special.
The albanese map  $Y\dashrightarrow \alb(Y)$ is surjective and its general fibers are 
rationally connected.

The induced map $A\dashrightarrow Y\dashrightarrow \alb(Y)$ is a morphism of abelian varieties; its kernel gives the abelian subvariety $B$ in Theorem~\ref{thm:ess.dim.}.
The remaining task is to understand the $(G\cap B)$-action
on the fibers of $Y\dashrightarrow \alb(Y)$.

In Section~\ref{sec.3} we show that an
abelian $p$-group of rank $>\frac{p}{p-1}n$ can not act faithfully on a 
rationally connected variety of dimension $n$. This generalizes some earlier work of Prokhorov \cite{Pro-p-group-Cr3}.

Putting these together, we obtain the proofs of
Theorem~\ref{thm:ess.dim.} and Corollary~\ref{cor:formula} in  Section~\ref{sec.4}. We also raise some  questions about the essential dimension of endomorphisms of other varieties.

Other applications of our results are discussed in Section~\ref{sec.5}.

We remark that the study of essential dimension is often related to questions around the Cremona groups. Dolgachev asked \cite{dolg-lect} whether the essential dimension $\ed(G)$ of a finite group $G$ is always bounded from below by the smallest integer $n$ such that $G$ embeds into the Cremona group $\mathrm{Cr}(n)$. This is open for non-abelian groups.


\subsection*{Acknowledgement}

We thank Patrick Brosnan, Igor Dolgachev, Louis Esser, Najmuddin Fakhruddin, Olivier Haution, Mark Kisin, Shizhang Li, Joaqu\'in Moraga, Zsolt Patakfalvi, Angelo Vistoli and Jesse Wolfson for many helpful discussions and comments, and Dan Edidin for pointing us to \cite{MR1466694}. We also thank the anonymous referee for careful reading and suggestions. 
Partial financial support to JK was provided by  the NSF under grant number DMS-1901855, while ZZ was partially supported by the NSF Grants DMS-2240926, DMS-2234736, a Clay research fellowship, as well as a Sloan fellowship.

\section{Rational images of abelian varieties}\label{sec.2}


Recall that a variety $X$ is called {\it uniruled} (resp. {\it rationally connected}) if through any point (resp. any pair of points) on $X$ there is a rational curve. See \cite{Kol-rational-curve} for the general theory of such varieties.

 As we indicated, the first step to the proof of Theorem \ref{thm:ess.dim.} is the following description of the rational images of abelian varieties.




\begin{prop} \label{prop:MRC quotient abelian}
Let $f\colon X\dashrightarrow Y$ be a dominant, rational map with connected general fibers. Assume that  $X$ is  an abelian variety and $Y$ is not uniruled. Then $Y$ is  birational to an abelian variety.
\end{prop}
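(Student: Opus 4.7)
The plan is to show that the Albanese map $\alpha\colon Y\dashrightarrow\alb(Y)$ is birational; since $\alb(Y)$ is an abelian variety this gives the proposition. First, the composition $X\dashrightarrow Y\dashrightarrow\alb(Y)$ extends to a morphism $\psi\colon X\to\alb(Y)$, because any rational map from a smooth variety to an abelian variety is regular. After a suitable translation $\psi$ is a homomorphism of abelian varieties; since the image of $\alpha$ generates $\alb(Y)$ as a group and $X\dashrightarrow Y$ is dominant, $\psi$ is surjective, and in particular $\alpha$ is surjective. Set $K=(\ker\psi)^{\circ}$, an abelian subvariety of $X$.

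For general $t\in\alb(Y)$, let $F_t=\alpha^{-1}(t)\subset Y$ and $K_t=\psi^{-1}(t)$, a translate of $K$. The factorization $X\to Y\to\alb(Y)$ forces $f^{-1}(F_t)\subseteq K_t$, and a dimension count gives equality; so $f$ restricts to a dominant rational map $g\colon K_t\dashrightarrow F_t$ whose generic fibers are connected, whence $F_t$ itself is connected. The crux is to show $\dim F_t=0$. I would combine two facts: (i) $F_t$ is non-uniruled, as a general fiber of a non-uniruled variety; and (ii) $\alb(F_t)=0$, the classical fact that general fibers of the Albanese map of a smooth projective variety have vanishing Albanese (proved via a relative Albanese construction and the universal property of $\alb(Y)$). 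Proceeding by induction on $\dim X$, if $\alb(Y)$ has positive dimension then $\dim K<\dim X$, and applying the proposition inductively to $g$ yields that $F_t$ is birational to an abelian variety, which by (ii) must be trivial. Hence $F_t$ is a point, so $\alpha$ is generically finite and, since $F_t$ is also connected, of degree one; that is, $\alpha$ is birational.

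The main obstacle lies in the corner case $\alb(Y)=0$, where $K=X$ and the induction above is vacuous. Here one must argue separately, for instance via the translation stabilizer $H\subset X$ of a generic fiber of $f$: if $H^{\circ}$ is positive-dimensional, $f$ factors through the quotient $X/H^{\circ}$ and one reduces to an abelian variety of strictly smaller dimension, while if $H^{\circ}$ is trivial a generic fiber of $f$ is of general type by Ueno's structure theorem, and Viehweg's subadditivity for fiber spaces yields $\kappa(Y)<0$, contradicting non-uniruledness. A cleaner strategy, which I would ultimately prefer, is to prove the stronger statement that general Albanese fibers of any rational image of an abelian variety with connected fibers are rationally connected; the corner case is then immediate, since rationally connected plus non-uniruled forces the dimension to vanish. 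Establishing this rational connectedness of the fibers appears to be the main technical challenge of the proof.
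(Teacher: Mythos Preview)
Your inductive framework via the Albanese map is coherent, and the inductive step is essentially correct. The real content, however, sits entirely in the corner case $\alb(Y)=0$, and there your proposal has a genuine gap. In your first suggestion, after Ueno and Viehweg you reach $\kappa(Y)=-\infty$ and then write ``contradicting non-uniruledness.'' But the implication $\kappa(Y)=-\infty \Rightarrow Y$ uniruled is precisely the non-vanishing/uniruledness conjecture, open in dimension $\geq 4$; the paper itself flags this as conjectural in the remark following the proposition. Your second suggestion---proving directly that Albanese fibers of rational images of abelian varieties are rationally connected---is what the paper deduces \emph{from} the proposition (Corollary~\ref{prop:MRC quotient abelian.cor}) via the MRC fibration, so invoking it here would be circular unless you supply an independent argument, which you do not.

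The paper's proofs bypass the Albanese of $Y$ altogether. The first proof exploits the hypothesis that $Y$ is non-uniruled in a direct geometric way: by \cite{Kol-rational-curve}*{VI.1.9}, the image in $Y$ of the exceptional locus of the graph resolution is uniruled, hence proper in $Y$. So over a dense open set of $Y$ the rational map $f$ is an honest proper morphism. Its general fiber then has trivial normal bundle, hence trivial canonical class by adjunction, and is therefore a translate of a fixed abelian subvariety $B$ by Ueno; thus $Y$ is birational to $X/B$. The key idea you are missing is this first step---turning ``$Y$ not uniruled'' into ``$f$ has proper general fibers''---which makes the problem local on $X$ and avoids any appeal to subadditivity or non-vanishing.
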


We give several different proofs of this proposition.

\begin{proof}[First proof of Proposition \ref{prop:MRC quotient abelian}]
Let $\Gamma\subset X\times Y$ be the closure of the graph
of $f$
   and $\pi_X, \pi_Y$ the coordinate projections. Let  $E\subset \Gamma$ be
the exceptional divisor of $\pi_X|_{\Gamma}:\Gamma\to X$.
   By \cite{Kol-rational-curve}*{Theorem VI.1.9}  every irreducible
component of
   $\pi_Y(E)$ is uniruled, hence $\pi_Y(E)\neq Y$.

Hence the restriction of $f$ to $X\setminus \pi_X(E)$ is a morphism with
proper general fibers $B_y$. Since $B_y$ is a general fiber, its normal
bundle is trivial, and so is its canonical class by the adjunction
formula.
   Thus $B_y$ is the translate of  an  abelian subvariety   $B\subset X$ by
   \cite{Ueno-book}*{Corollary 10.5}, which is independent of $y$ since
abelian subvarieties have no flat deformations. It follows that $Y$ is birational to $X/B$ which is an abelian variety.
\end{proof}

\begin{proof}[Second proof of Proposition \ref{prop:MRC quotient abelian}] 
By \cite{KL-quotient-of-CY}*{Theorem 14},
there is a finite, \'etale cover  $h:X'\to X$ and  a product decomposition
$X'\cong F'\times Y'$ such that the composite
$f\circ h\colon X'\to X\dashrightarrow Y$ factors as
$$
X'\stackrel{\pi}{\longrightarrow} Y' \stackrel{\tau}{\dashrightarrow} Y,
$$
where $\pi$ is the coordinate projection, and $\tau$ is generically finite.
Note that $X'$, being a finite \'etale cover of an abelian variety, is also an abelian variety. The factor $F'$ is a subvariety with trivial canonical class, hence is also an abelian varieties by \cite{Ueno-book}*{Corollary 10.5}, and so
 $h\bigl(F'\times \{y'\}\bigr)\subset X$ is a translate of an
abelian subvariety $B_{y'}\subset X$ for general $y'\in Y'$.
Here  $B_{y'}$ depends continuously on $y'$ for general $y'\in Y'$, hence
in fact $B:=B_{y'}$ is independent of $y'$. 
Thus  $f$ factors as
$$
f\colon  X\to X/B \dashrightarrow Y,
$$
where $X/B$ is an abelian variety, and $ X/B \dashrightarrow Y$ is
generically finite. We assumed that $f$ has connected general fibers,
so $ X/B \dashrightarrow Y$ is birational.
\end{proof}

\begin{rem}[Communicated with Zsolt Patakfalvi]
Proposition \ref{prop:MRC quotient abelian} also holds in positive characteristics. As in the first proof above, the map $f\colon X\dashrightarrow Y$ has proper general fibers over an open set $U\subseteq Y$. Let $\mathrm{Frob}^e\colon U' \to U$ be an iterated Frobenius morphism for some $e\gg 0$ and let $X'$ to be the normalization of the reduced subscheme of $X \times_Y U'$. Then the generic fiber $F$ of $X'\to U'$ is geometrically reduced. Let $g\colon F\to X$ be the induced
morphism. By \cite[Theorem 1.1]{Tanaka} one has $K_F = g^* K_X - E$ for some effective divisor $E$, hence $-K_F$ is pseudo-effective (equivalently, $\omega^*_F$ is weakly-positive as in \cite[Definition 2.2]{EP-albanese-char-p}). Note that $F$ has maximal albanese dimension since it admits a generically finite map to the abelian variety $X$. By \cite[Proposition 3.2]{EP-albanese-char-p}, this implies that the general fiber of $X'\to U'$ are abelian varieties and thus $K_{F}=E=0$. By \cite[Theorem 1.1]{Tanaka}, we deduce that the generic fiber of $f$ is also geometrically reduced and hence the general fibers of $f$ are abelian varieties as well. We then conclude as in the above proofs of Proposition \ref{prop:MRC quotient abelian}.
\end{rem}

\begin{rem}
Having in mind possible generalizations, it may be worthwhile to recall how
Proposition~\ref{prop:MRC quotient abelian} relates to
various conjectures of Iitaka; see \cite{Ueno-book} for an introduction. 

Let us denote the general fiber of $X\dashrightarrow Y$ by $F$.  Iitaka's so called $C_{n,m}$  conjecture  says that $\kappa(X)\ge \kappa(F)+\kappa(Y)$, where $\kappa(X)$ is the Kodaira dimension of $X$ which takes values in $\{-\infty\}\cup \bN$.  On the other hand, the non-vanishing conjecture predicts that if $Y$ is not uniruled then $\kappa(Y)\ge 0$. Combining these two, and since $\kappa(X)=0$, we see that $\kappa(F)=\kappa(Y)=0$. This implies that $F$ is the translate of an abelian subvariety and $Y$ is birational to the quotient abelian variety as before.
\end{rem}

\begin{cor}\label{prop:MRC quotient abelian.cor}
  Let $X$ be an abelian variety and
   $f\colon X\dashrightarrow Y$  a dominant, rational map with connected general fibers. Then 
  the albanese map  $Y\to \alb(Y)$ is surjective and its general fibers are 
  rationally connected.
\end{cor}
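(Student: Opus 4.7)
The plan is to identify the Albanese morphism of $Y$ with the maximal rationally connected (MRC) fibration of $Y$; both assertions of the corollary then follow at once. After passing to a smooth projective birational model of $Y$ (harmless, since both $\alb(Y)$ and rational connectedness of general fibers are birational invariants), let $\pi\colon Y\dashrightarrow Z$ denote the MRC fibration. By Graber-Harris-Starr, $Z$ is not uniruled, and a standard Stein factorization argument shows that the composite $X\dashrightarrow Y\dashrightarrow Z$ is dominant with connected general fibers. Hence Proposition~\ref{prop:MRC quotient abelian} applies and yields that $Z$ is birational to an abelian variety $Z_0$. Any rational map from a smooth projective variety to an abelian variety is a morphism, so the MRC fibration is realized by an honest morphism $\beta\colon Y\to Z_0$; by the universal property of the Albanese this factors as
\[
\beta\colon Y\xrightarrow{\alpha}\alb(Y)\xrightarrow{\phi}Z_0,
\]
where $\phi$ is (up to translation) a homomorphism of abelian varieties.

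It remains to show that $\phi$ is an isomorphism, for then $\alpha$ is birational to the MRC fibration and both conclusions follow. For surjectivity of $\alpha$, compose $f$ with $\alpha$ to obtain a rational map $X\dashrightarrow\alb(Y)$; since $X$ is abelian and projective, this extends to a morphism whose image is a translate of a closed abelian subvariety, and by dominance of $f$ this image equals the closed subvariety $\alpha(Y)\subseteq\alb(Y)$. Since the Albanese image generates $\alb(Y)$ as a group, the only such coset is $\alb(Y)$ itself, so $\alpha(Y)=\alb(Y)$, and in particular $\phi$ is surjective. For injectivity of $\phi$, pick a general $z\in Z_0$: the MRC fiber $\beta^{-1}(z)=\alpha^{-1}(\phi^{-1}(z))$ is rationally connected, and $\alpha$ restricts to a surjection from this fiber onto $\phi^{-1}(z)$, which is a torsor for $K:=\ker\phi$. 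Since a rationally connected variety admits no non-constant rational map to an abelian variety, this image must be a single point, forcing $K=\{0\}$.

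The main technical point is the applicability of Proposition~\ref{prop:MRC quotient abelian} to the composite $X\dashrightarrow Z$, which requires verifying that the composite rational map has connected general fibers; this is a routine Stein factorization argument, but it is the only place where the hypothesis on $f$ enters substantively. Once granted, the identification of $\phi$ with an isomorphism is a short deduction from the universal property of the Albanese and the fact that rationally connected varieties admit only constant maps to abelian varieties.
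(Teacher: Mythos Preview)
Your argument is correct and follows the same route as the paper: take the MRC fibration $Y\dashrightarrow Z$, apply Proposition~\ref{prop:MRC quotient abelian} to see that $Z$ is birational to an abelian variety, and conclude that the Albanese map of $Y$ coincides with its MRC fibration. You supply more detail than the paper does in the last step---factoring through $\phi\colon\alb(Y)\to Z_0$ and showing $\phi$ is an isomorphism---and you explicitly flag the connected-fibers check for the composite $X\dashrightarrow Z$, which the paper leaves implicit.
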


\begin{proof}
  By \cite{Kol-rational-curve}*{Theorems 5.4 and 5.5} and \cite{GHS-RC-fibration-section}*{Corollary 1.4}, for any variety $Y$
  there is a dominant rational map  $\sigma\colon Y\dashrightarrow Z$ such that $Z$ is not uniruled, and whose general fibers are rationally connected.
  This map is called the {\it  maximal rationally connected} or {\it MRC} fibration, and it is birationally unique.

  If $Y$ is the rational image of an abelian variety, then so is $Z$, hence it is birational to an 
  abelian variety by Proposition~\ref{prop:MRC quotient abelian}.
  Thus the the albanese map $Y\to \alb(Y)$ is the  MRC fibration of $Y$.
  \end{proof}

\section{Actions of abelian $p$-groups}\label{sec.3}

Another ingredient in the proof of Theorem \ref{thm:ess.dim.} is an optimal bound on the ranks of abelian $p$-subgroups of the birational automorphism groups of rationally connected varieties. For $p>\dim +1$, \cite{j-xu} obtains  optimal bounds using the fixed point theorems of \cite{MR3905117}.
We observe that,  when there are no fixed points, the method of \cite{MR3905117} still gives small orbits.
This leads to  optimal bounds for $p\leq \dim+1$.

We first prove some general results about abelian   $p$-groups acting on varieties. For a prime $p$ and an integer $m$, set $\nu_p m = \sup\{a\in\bN \,:\, p^a | m \}\in \bN\cup \{+\infty\}$.

\begin{prop}\label{prop.1}
  Let $K$ be an algebraically closed field, and $G$  an abelian  $p$-group acting on a  smooth, projective $K$-scheme $X$ of pure dimension $n$.
Assume that $\chr K\neq p$. 
  Then there is a subgroup $H\le G$ such that
  $H$ has a fixed point on $X$, and
  $$
  \log_p[G:H]\leq \nu_p (\chi(X,\cO_X))+\tfrac{n}{p-1}.
  $$
\end{prop}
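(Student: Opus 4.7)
The plan is to first choose a subgroup $H\le G$ large enough to have a fixed point on $X$ but small enough that $[G:H]$ is controlled by $\chi(X,\cO_X)$, and then to invoke the holomorphic Atiyah--Bott / equivariant localization theorem to extract the required $p$-adic comparison.

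\emph{Choice of $H$ and freeness of $G/H$.} I would take $H\le G$ maximal (under inclusion) among subgroups with $Y:=X^H\ne\emptyset$; such an $H$ exists since $X^{\{1\}}=X$. Because $\chr K\ne p$, the fixed locus $Y$ is smooth and projective, and since $G$ is abelian it preserves $Y$ with the action factoring through $G/H$. If some $g\in G\setminus H$ fixed a point $y\in Y$, then $\langle H,g\rangle\supsetneq H$ would also have a fixed point, contradicting the maximality of $H$. Hence $G/H$ acts freely on $Y$, and since $\chr K\nmid |G/H|$ the quotient $Y\to Y/(G/H)$ is finite étale of degree $|G/H|$. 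Holomorphic Euler characteristics multiply for such covers, so $\chi(Y,\cO_Y)=|G/H|\cdot\chi(Y/(G/H),\cO)$, giving
\[
\log_p[G:H]=\nu_p|G/H|\le\nu_p\chi(Y,\cO_Y).
\]

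\emph{Main $p$-adic comparison.} The crux is then
\[
\nu_p\chi(Y,\cO_Y)\le\nu_p\chi(X,\cO_X)+\frac{n-\dim Y}{p-1},
\]
which combined with the previous inequality and $\dim Y\ge 0$ yields the proposition. For this I would apply the equivariant holomorphic Lefschetz / localization theorem to the $H$-action on $X$. Since $X^H=Y$, the equivariant Euler character $\chi_H(X,\cO_X)$ equals, after localization in $R(H)$, the $Y$-integral of $\mathrm{td}(Y)$ against the inverse Thom class $[\Lambda_{-1}N^*_{Y/X}]^{-1}$; written out, the relevant denominator is $\prod_i(1-\chi_i^{-1}e^{-x_i})$, a product of $n-\dim Y$ factors where $\chi_i$ are the non-trivial characters of $H$ on $N_{Y/X}$ and $x_i$ the corresponding Chern roots. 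The key $p$-adic input is that for any non-trivial character $\chi$ of the $p$-group $H$, the cyclotomic unit $1-\chi(h)$ has $p$-adic valuation at most $1/(p-1)$; inverting $n-\dim Y$ such factors costs at most $(n-\dim Y)/(p-1)$ in $\nu_p$.

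\emph{Main obstacle.} The hardest point will be making the previous paragraph precise: one must relate the character-valued localization formula to the ordinary Euler characteristics $\chi(X,\cO_X)$ and $\chi(Y,\cO_Y)$ (the former arises as the augmentation of $\chi_H$, where the denominators formally degenerate), and verify that the Todd / Chern-root corrections $e^{-x_i}=1+O(x_i)$ do not contribute extra $p$-adic loss beyond what the cyclotomic denominators already give. This requires a careful $p$-adic analysis in $\bZ_p[\zeta_{p^\infty}]$: expanding each $e^{-x_i}$ introduces higher powers of the denominators, but the total degree in the $x_i$ surviving $\int_Y$ is capped by $\dim Y$, which is exactly what is needed to keep the overall loss at $(n-\dim Y)/(p-1)$ and close out the bound.
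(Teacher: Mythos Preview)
Your approach is genuinely different from the paper's, and the first half is clean: choosing $H$ maximal with $X^H\neq\emptyset$, observing that $G/H$ acts freely on $Y=X^H$, and deducing $\log_p[G:H]\le\nu_p\chi(Y,\cO_Y)$ from \'etale multiplicativity are all correct. The difficulty is entirely in your ``main $p$-adic comparison'', and the sketch you give does not close it.

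The core problem is that the holomorphic Lefschetz / localization formula computes the Lefschetz numbers $L(h)=\sum_i(-1)^i\mathrm{tr}(h\mid H^i(X,\cO_X))$ for $h\neq 1$ (equivalently, it computes $\chi_H(X,\cO_X)$ as an element of the \emph{localized} representation ring). The quantity you need, $\chi(X,\cO_X)=L(1)$, is precisely the value at the augmentation ideal, where every denominator $1-\chi_j^{-1}$ vanishes. Your proposal does not explain how to pass from the family $\{L(h)\}_{h\neq 1}$ to $L(1)$; the phrase ``the former arises as the augmentation of $\chi_H$, where the denominators formally degenerate'' names the problem rather than solving it. Without this bridge there is no inequality relating $\nu_p\chi(Y,\cO_Y)$ to $\nu_p\chi(X,\cO_X)$ at all. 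A secondary issue: your denominator count is off. Expanding $(1-\chi_j^{-1}e^{-x_j})^{-1}$ as a geometric series in $(1-\chi_j^{-1})^{-1}$ introduces extra powers, and the constraint $\sum k_j\le\dim Y$ from integration gives at most $c+\dim Y=n$ total factors, not $c=n-\dim Y$. (This weaker bound would still suffice for the proposition, but it does not follow from the argument as written.)

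The paper's proof bypasses localization entirely and is much shorter. It takes $H$ to be the stabilizer of a point in a \emph{smallest} $G$-orbit, of size $p^c$. The key observation (discussed in their \S\ref{eit.s}) is that for reductive abelian groups the Chern numbers of $X$ are represented by $G$-invariant $0$-cycles, hence each is divisible by $p^c$. By Hirzebruch--Riemann--Roch, $\chi(X,\cO_X)=\mathrm{td}_n(X)$ is an integer combination of these Chern numbers divided by the denominator of the $n$th Todd polynomial, whose $p$-part is $p^{\lfloor n/(p-1)\rfloor}$ by a lemma of Hirzebruch. Hence $\nu_p\chi(X,\cO_X)\ge c-\tfrac{n}{p-1}$, which is exactly the bound. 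No fixed-locus analysis, no equivariant Euler characteristics of $Y$, no cyclotomic $p$-adics.
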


\begin{proof}
Let $a=\nu_p (\chi(X,\cO_X))$ which we may assume is finite.
Let $p^c$ be the  smallest size of a $G$-orbit.
Then $p^c$ divides the  size of every $G$-orbit.

As we discuss in Paragraph~\ref{eit.s}, the  Chern numbers of $X$ are represented by $G$-invariant $0$-cycles, hence they are divisible by $p^c$.

By \cite[Lemma 1.7.3]{hirz}, the exponent of the largest $p$-power dividing the denominator of the $n$th Todd class is
$\lfloor\tfrac{n}{p-1}\rfloor$, thus the exponent of the largest $p$-power dividing   $\operatorname{td}_n(X)$
is at least $c-\tfrac{n}{p-1}$.
By the Hirzebruch-Riemann-Roch theorem,  $\chi(X, \o_X)=\operatorname{td}_n(X)$, so this exponent equals $a$ by assumption.
Thus $c\leq a+\tfrac{n}{p-1}$. Now take $H$ to be the stabilizer of a point in a smallest orbit. 
\end{proof}


\begin{cor}\label{rank.cor}
  Let $K$ be a field of characteristic $\neq p$, and $G$  an abelian  $p$-group acting faithfully on a  smooth, projective
$K$-scheme $X$ of pure dimension $n$. 
Then
  $$
  \rank (G) \leq \nu_p (\chi(X,\cO_X))+\tfrac{p}{p-1}n. 
  $$
\end{cor}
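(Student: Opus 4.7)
The plan is to apply Proposition~\ref{prop.1} to obtain a subgroup $H \le G$ of small index fixing a point of $X$, and then to bound $\rank(H) \le n$ using the linearized action at that fixed point. After passing to the algebraic closure (under which smoothness, projectivity, pure dimension, and $\chi(X,\cO_X)$ are preserved, and faithfulness of the $G$-action descends by faithful flatness), Proposition~\ref{prop.1} furnishes $H \le G$ and a fixed point $x \in X$ satisfying
\[
\log_p[G:H] \le \nu_p(\chi(X,\cO_X)) + \tfrac{n}{p-1}.
\]

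The heart of the proof is the bound $\rank(H) \le n$. Since $\chr K \ne p$ and $H$ is a finite $p$-group fixing the smooth point $x$, Cartan's linearization theorem gives an $H$-equivariant isomorphism between the formal neighborhood of $x$ and the tangent space $T_x X$ with its induced linear $H$-action; this produces a representation $\rho\colon H \to \GL(T_x X) \cong \GL_n(K)$ whose kernel is the subgroup of $H$ acting trivially on the irreducible component $X_0 \ni x$. When $X$ is irreducible, $\rho$ is injective by faithfulness; the image, being a finite abelian $p$-subgroup of $\GL_n(K)$ in characteristic coprime to $p$, is simultaneously diagonalizable and hence embeds into $(K^\times)^n$, whose $p$-torsion $\cong (\bZ/p\bZ)^n$ has $\bF_p$-rank $n$, so $\rank(H) \le n$. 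For reducible $X$, the commutativity of $G$ does the work: if $h \in H$ acts trivially on $X_0$, then for every $g \in G$ and $y \in X_0$ we have $h(gy) = (gh)y = g(hy) = gy$, so $h$ also acts trivially on every $G$-translate $gX_0$; peeling off the $G$-orbit of $X_0$ allows a reduction to a pure-dimensional scheme with strictly fewer components, and one iterates.

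Combining the two estimates is then routine: $G/H$ is a $p$-group of order $[G:H]$, so $\rank(G/H) \le \log_p[G:H]$, and the short exact sequence $0 \to H \to G \to G/H \to 0$ yields
\[
\rank(G) \le \rank(H) + \log_p[G:H] \le n + \nu_p(\chi(X,\cO_X)) + \tfrac{n}{p-1} = \nu_p(\chi(X,\cO_X)) + \tfrac{p}{p-1}n.
\]
The main obstacle is the rank-$n$ estimate on $H$ in the reducible case, since an element of $H$ can act trivially near $x$ while remaining nontrivial on another component; exploiting the commutativity of $G$ to propagate triviality across the $G$-orbit of $X_0$ is precisely what makes the reduction to the irreducible situation work.
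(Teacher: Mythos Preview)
Your approach matches the paper's: apply Proposition~\ref{prop.1} to produce $H\le G$ with a fixed point and small index, bound $\rank(H)\le n$ via the linearized action on $T_xX$, and combine using $\rank(G)\le\rank(H)+\log_p[G:H]$. Your remark that one must first base-change to $\bar K$ (Proposition~\ref{prop.1} being stated only over an algebraically closed field) is a detail the paper leaves implicit; otherwise the irreducible case is handled identically.

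The gap is in your handling of reducible $X$. You correctly show that any $h\in\ker\rho$ acts trivially on the whole $G$-orbit $G\cdot X_0$ of the component through $x$, but ``peel off $G\cdot X_0$ and iterate'' does not work: on the complement $X'$ the group $G$ need no longer act faithfully, and neither $H$ nor $\ker\rho$ is known to fix a point of $X'$, so there is nothing to feed back into the argument. In fact the corollary is false without a connectedness hypothesis: for odd $p$ let $G=(\bZ/p)^2$ act on $X=\bP^1\sqcup\bP^1$ with the $i$-th cyclic factor of $G$ acting nontrivially only on the $i$-th copy of $\bP^1$; then the action is faithful, $n=1$, $\chi(X,\cO_X)=2$ so $\nu_p(\chi)=0$, and the claimed bound would force $\rank(G)\le\tfrac{p}{p-1}<2$. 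One must therefore assume $X$ connected (equivalently irreducible, since $X$ is smooth), in which case $\rho$ is already injective and no iteration is needed. The paper's proof simply asserts ``$H$ acts faithfully on the tangent space $T_xX$'' at this step, tacitly using irreducibility.
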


\begin{proof}
Let $H\le G$ be the subgroup obtained from Proposition \ref{prop.1} and let $x\in X$ be an $H$-fixed point. If $\chr K\neq p$ then
$H$ acts faithfully on the tangent space $T_xX$, so $H$ is isomorphic to a
subgroup of $\GL_n(K)$. Since $H$ is abelian,  it is diagonalizable, so it is the product of at most $n$ cyclic groups. The statement now follows as $\rank(G)\le \rank(H)+\log_p [G:H]$.
\end{proof}

\begin{rem}  
In fact, we get a stronger result, that 
$G\cong G_1\times G_2$, where $\rank (G_1)\leq n+\nu_p (\chi(X,\cO_X))$, and
 $|G_2|\leq p^{n/(p-1)}$.
\end{rem}

If $X$ is  smooth, projective and  rationally connected over a field of characteristic $0$, then  $\chi(X, \o_X)=1$. So we get the following special case.

\begin{cor} \label{cor:abelian action on RC}
  Let $G$ be an abelian  $p$-group acting faithfully on a  smooth, projective, rationally connected variety $X$ of dimension $n$ over a field of characteristic $0$. Then
  $$
  \rank (G) \leq \tfrac{p}{p-1}n \leq 2n.  \hfill\qed
  $$
\end{cor}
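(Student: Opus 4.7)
The plan is to deduce Corollary~\ref{cor:abelian action on RC} as a direct specialization of Corollary~\ref{rank.cor}. The only thing that needs to be verified is the vanishing $\nu_p(\chi(X,\cO_X))=0$, after which the bound $\rank(G)\leq \tfrac{p}{p-1}n$ follows by plugging in, and $\tfrac{p}{p-1}n\leq 2n$ is immediate since $p\geq 2$ forces $\tfrac{p}{p-1}\leq 2$.

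To see $\chi(X,\cO_X)=1$, I would invoke the standard fact that a smooth projective rationally connected variety $X$ over a field of characteristic $0$ satisfies $H^0(X,\Omega_X^i)=0$ for all $i>0$; indeed a nonzero global holomorphic $i$-form would pull back nontrivially to a very free rational curve joining two general points, contradicting $H^0(\bP^1,\Omega_{\bP^1}^i(d))=0$ for $i\geq 1$. By Hodge symmetry in characteristic $0$, this gives $H^i(X,\cO_X)=0$ for all $i>0$, so $\chi(X,\cO_X)=h^0(X,\cO_X)=1$, and hence $\nu_p(\chi(X,\cO_X))=\nu_p(1)=0$.

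Combining these observations, Corollary~\ref{rank.cor} yields
\[
\rank(G)\leq \nu_p(\chi(X,\cO_X))+\tfrac{p}{p-1}n = \tfrac{p}{p-1}n \leq 2n,
\]
which is the desired bound. No new obstacle arises beyond recalling the vanishing $H^i(X,\cO_X)=0$ for rationally connected $X$, so the proof is essentially a one-line substitution into the previous corollary.
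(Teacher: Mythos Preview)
Your approach is exactly the paper's: observe that $\chi(X,\cO_X)=1$ for smooth projective rationally connected $X$ in characteristic $0$, then plug $\nu_p(\chi)=0$ into Corollary~\ref{rank.cor}. One small correction to your parenthetical justification of the well-known vanishing: the relevant fact is $H^0(\bP^1,f^*\Omega_X^i)=0$ for a very free morphism $f$ (because $f^*T_X$ is ample, so $f^*\Omega_X$ splits into negative line bundles on $\bP^1$), not ``$H^0(\bP^1,\Omega_{\bP^1}^i(d))=0$ for $i\ge 1$,'' which is false for $i=1$ and $d\ge 2$.
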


\begin{expl} \label{xe.2}  Let $\epsilon$ be a primitive  $m$th root of unity.
  The dihedral group $D_{2m}$ 
 acts on $\p^1$ by  $x\mapsto  \epsilon x, x\mapsto x^{-1}$.
  Thus $D_{2m}^n$ acts on $(\p^1)^n$ coordinate wise.

  For $m=2$ we get a $(\z/2)^{2n}$ action on  $(\p^1)^n$.
  So the rank bound is sharp in Corollary~\ref{cor:abelian action on RC} for $p=2$.

  Esser pointed out that 
  $(\z/p)^{p}$ acts diagonally on $
  X_p:=\bigl(\tsum_i x_i^p=0\bigr)\subset\p^{p}.
  $
  Powers of $X_p$ show that 
the bound is sharp for every $p$.
\end{expl}

\begin{say}[Equivariant intersection theory]\label{eit.s}
  Let $G$ be a reductive, abelian group acting on a smooth, projective variety $X$. In other words, $G$ is abelian, its identity component $G_0$ is a torus $\bG_m^r$ (for some $r\in\bN$) and $|G/G_0|$ is not divisible by the characteristic of the field. Then one can do intersection theory with $G$-invariant cycles.
  That is, the intersection of $G$-invariant cycles can be
  represented by $G$-invariant cycles. This is obvious if the intersection has the expected dimension, but not otherwise. Example~\ref{xe.3} shows that this in fact fails for non-abelian 2-groups.

  For  $G=T$  a torus, $T$-equivariant intersection theory is treated in 
  \cite{MR2689936}. A more sophisticated version is in \cite{MR1466694}.
  Note, however, that torus actions have additional features, so not all claims in these papers generalize.
  The  treatment in \cite{MR3905117}*{Section 4} covers both
  reductive, abelian groups and non-abelian $p$-groups in characteristic p. 

  Here  we discuss the assertion that we actually use.


  \medskip
      {\it Claim~\ref{eit.s}.1.}  Let $G$ be a reductive, abelian group acting on a smooth projective variety $X$. Then  the Chern numbers of $X$ are represented by $G$-invariant $0$-cycles.

      \medskip

      {\it Proof.} Using the formulas in  \cite[Sec.3.2]{Fulton84}, this is equivalent to representing the Segre numbers.
      Let $\pi:P\to X$ be the projectivization of $T_X$ with tautological line bundle $\o_P(1)$.  Set $D:=c_1\bigl(\o_P(1)\bigr)$.

      By  \cite[Sec.3.1]{Fulton84},  the Segre  classes of $X$ are $s_i(X):=\pi_*\bigl(D^{n-1+i}\bigr)$, and the Segre numbers are the  intersection products  $s_{i_1}(X)\cdots s_{i_r}(X)$ for
$i_1+\cdots+i_r=n$. Repeatedly using the projection formula, the Claim is implied by  the following.

        \medskip
            {\it Lemma~\ref{eit.s}.2.}  Let $Z\subset P$ be a $G$-invariant, $G$-irreducible cycle.  Then there is a  $G$-invariant divisor $D_Z\sim D$ such that    $Z\not\subset \Supp D_Z$.

      \medskip

{\it Proof.} The $G$-action on $X$ lifts to a $G$-action on $P$. Pick $z\in Z$ and choose some $G$-linearized very ample line bundles $L,M$ on $P$ such that $D\sim L-M$. Since $G$ is reductive and abelian, both $H^0(L)$ and $H^0(M)$ are spanned by $G$-equivariant sections. Thus there exist $G$-equivariant sections $s_L\in H^0(L)$ and $s_M\in H^0(M)$ such that $z$ is not in their zero locus. Set $D_Z:=(s_L=0)-(s_M=0)$. \qed
\end{say}

\begin{expl}\label{xe.3} Let $G$ be the  group of order 32 acting on $\p^1\times \p^1$, generated by the $(\z/2)^4$-action as in   Example~\ref{xe.2}, plus the interchanging the 2 copies of $\p^1$.  The points  $(0,0), (1,1), (i,i)$ each have a $G$-orbit of size 4, all other $G$ orbits have size $\geq 8$.

Let $S\to \p^1\times \p^1$ be the blow-up of the 3 orbits of size 4.
The $G$-action lifts to $S$, and   all $G$-orbits on $S$ have size $\geq 8$.
However  $c_1(S)^2=8-12$ is not divisible by 8.
\end{expl}

\section{Essential dimension of isogenies}\label{sec.4}

We now turn to the proof of our main results.



\begin{proof}[Proof of Theorem \ref{thm:ess.dim.}]
By the definition of essential dimension, there exists a Cartesian diagram 
\[
\xymatrix{
A \ar[r] \ar@{-->}[d] & A' \ar@{-->}[d] \\
Y \ar[r] & Y'
}
\]
where the horizontal maps are finite and $\dim Y = \dim Y' = \ed(\alpha)$. Replacing $Y'$ by its normalization in $A'$ and $Y$ by the corresponding fiber product, we may assume that the rational maps $A'\dashrightarrow Y'$ and $A\dashrightarrow Y$ have connected fibers. In particular, the function field $\bC(Y)$ of $Y$ is algebraically closed in $\bC(A)$. From this we see that the Galois action of $G:=\ker(\alpha)$ on $\bC(A)$ descends to $\bC(Y)$: in fact, any element in the Galois conjugate of $\bC(Y)$ is finite over $\bC(Y)$, hence is contained in $\bC(Y)$ since $\bC(Y)$ is algebraically closed in $\bC(A)$. It follows that $G$ acts birationally and faithfully on $Y$.

Let $Y\to \alb(Y)$ be the  albanese map and  $F\subset X$ a general fiber.
The composite  $A\dashrightarrow Y \to \alb(Y)$ is a morphism of abelian varieties; 
let $B\subseteq A$ be its kernel. It is connected by construction.

Note that $Y\to \alb(Y)$ is $G$-equivariant, and
 $L:=G\cap B$ acts  trivially on $\alb(Y)$. Hence $L$ must act faithfully on $F$.


The fiber $F$ is rationally connected by  Corollary~\ref{prop:MRC quotient abelian.cor}.
Thus Corollary~\ref{cor:abelian action on RC} implies that  $\dim F\ge \frac{p-1}{p} \rank_p (L)$. Therefore,
\[
\ed(\alpha)=\dim Y = \dim A - \dim B + \dim F \ge \dim A - \dim B + \tfrac{p-1}{p} \rank_p (L),
\]
as desired.   
\end{proof}

In order to prove  Corollary~\ref{cor:formula} we need some  properties of essential dimension.

\begin{lem} \label{lem:ess.dim subadditive}
Let $f_i\colon X_i\to Y$ $(i=1,2)$ be finite morphisms and let $f\colon X=X_1\times_Y X_2\to Y$ be the fiber product. Then $\ed(f)\le \ed(f_1)+\ed(f_2)$.
\end{lem}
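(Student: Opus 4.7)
The plan is to combine the two Cartesian diagrams witnessing $\ed(f_i)$ into a single diagram whose base has dimension at most $m_1+m_2 := \ed(f_1)+\ed(f_2)$. By the definition of essential dimension, for each $i\in\{1,2\}$ I can choose a dominant rational map $Y\dashrightarrow W_i$ and a generically finite morphism $g_i\colon Z_i\to W_i$ with $\dim W_i=m_i$, together with a birational identification of $X_i$ with the fiber product $Y\times_{W_i} Z_i$ as $Y$-schemes.

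Taking the product of these two rational maps yields an induced rational map $Y\dashrightarrow W_1\times W_2$; let $W$ be the closure of its image, so that $\dim W\le m_1+m_2$. I then set $Z$ to be (a dominant irreducible component of) the base change $(Z_1\times Z_2)\times_{W_1\times W_2} W$, which maps generically finitely to $W$ since $Z_1\times Z_2\to W_1\times W_2$ does. This gives a candidate diagram with base $W$ of dimension $\le m_1+m_2$.

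It remains to birationally identify $X=X_1\times_Y X_2$ with $Y\times_W Z$ as $Y$-schemes. Unwinding the fiber products, the intermediate factor $W$ cancels and one computes
\[
Y\times_W Z \;=\; Y\times_{W_1\times W_2}(Z_1\times Z_2) \;=\; (Y\times_{W_1}Z_1)\times_Y (Y\times_{W_2}Z_2),
\]
where in the first equality the map $Y\to W_1\times W_2$ is the original product map factoring through $W$. By hypothesis each factor $Y\times_{W_i}Z_i$ is birational to $X_i$ over $Y$, so the right hand side is birational to $X_1\times_Y X_2=X$. This exhibits $f$ as the birational pullback of the generically finite morphism $Z\to W$ along $Y\dashrightarrow W$, giving $\ed(f)\le \dim W\le m_1+m_2$.

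The only delicate points are standard birational bookkeeping: one must choose the rational maps $Y\dashrightarrow W_i$ and the birational identifications $X_i\sim Y\times_{W_i}Z_i$ as $Y$-schemes consistently, and select an irreducible component of the various fiber products that dominates the intended base. These can all be handled by replacing varieties by suitable birational models (e.g.\ normalizations and dominant components), and do not present a genuine obstacle.
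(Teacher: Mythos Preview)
Your argument is correct and is exactly the natural unpacking of the paper's one-line proof, which simply reads ``Clear from the definition.'' You have made explicit the fiber-product manipulation that the authors regard as routine; there is no substantive difference in approach.
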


\begin{proof}
Clear from the definition.
\end{proof}

\begin{lem} \cite{BR-ess-dim}*{Theorem 6.1} \label{lem:ess.dim abelian cover}
Let $f\colon X\to Y$ be a finite abelian cover with Galois group $G$. Then $\ed(f)\le \rank(G)$.
\end{lem}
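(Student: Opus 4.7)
The plan is to use Kummer theory to realize $X$ birationally as a pull-back of a model cover on a product of projective lines. Write $r = \rank(G)$ and decompose $G$ into invariant factors
\[
G \cong \bZ/n_1 \oplus \cdots \oplus \bZ/n_r,
\]
so that exactly $r$ cyclic generators appear. Since the ground field is algebraically closed of characteristic $0$, it contains all roots of unity, so Kummer theory applies to the abelian Galois extension $\bC(X)/\bC(Y)$: there exist rational functions $a_1,\dots,a_r \in \bC(Y)^*$ with
\[
\bC(X) \cong \bC(Y)\bigl(\sqrt[n_1]{a_1},\,\sqrt[n_2]{a_2},\,\ldots,\,\sqrt[n_r]{a_r}\bigr),
\]
the Galois action being given by multiplication of each $\sqrt[n_i]{a_i}$ by an $n_i$-th root of unity.

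Next I would package the $a_i$ into a single rational map. Each $a_i$ is a nonzero rational function on $Y$, so gives a dominant rational map $Y \dashrightarrow \bP^1$; together they produce
\[
\varphi \colon Y \dashrightarrow (\bP^1)^r, \qquad \varphi = (a_1,\dots,a_r).
\]
On the target side, consider the finite abelian cover
\[
g \colon (\bP^1)^r \longrightarrow (\bP^1)^r, \qquad (z_1,\dots,z_r) \longmapsto (z_1^{n_1},\dots,z_r^{n_r}),
\]
which is itself Galois with group $G$. Taking the fiber product of $Y \dashrightarrow (\bP^1)^r$ and $g$ along the second copy, a standard computation of function fields shows that the resulting cover of $Y$ has function field $\bC(Y)(\sqrt[n_1]{a_1},\dots,\sqrt[n_r]{a_r}) = \bC(X)$. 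Hence $f\colon X\to Y$ is birational to the pull-back of the $r$-dimensional map $g$ via $\varphi$.

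By the very definition of essential dimension, this exhibits an upper bound $\ed(f) \le \dim (\bP^1)^r = r = \rank(G)$. The only technical point to watch is that the realization of the cover through Kummer radicals really produces a fiber square (not merely isomorphic Galois covers); this is handled by checking that the naturally induced $G$-action on the fiber product matches the Galois action on $\bC(X)$, which it does because the Kummer pairing is by construction compatible with the $G$-action on the $\sqrt[n_i]{a_i}$. No step involves any genuine obstacle.
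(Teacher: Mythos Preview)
Your proof is correct and is essentially the same as the paper's: the paper writes $f$ as the fiber product of $\rank(G)$ cyclic covers (each of essential dimension one, by the same Kummer-theoretic model $z\mapsto z^{n_i}$ on $\bP^1$) and then invokes subadditivity, which amounts precisely to your explicit construction of the pull-back from $(\bP^1)^r\to(\bP^1)^r$. The only cosmetic difference is that the paper packages the argument via the subadditivity lemma rather than writing down the product map $\varphi$ directly.
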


\begin{proof}
We may write $f$ as the fiber product of $\rank(G)$ cyclic covers. Since any cyclic cover has essential dimension one, the statement follows from Lemma~\ref{lem:ess.dim subadditive}.
\end{proof}

\begin{proof}[Proof of Corollary \ref{cor:formula}]
We first prove that
\[
\ed(\alpha)\le \dim A -\dim B + \rank(\ker(\alpha)\cap B)
\]
for any abelian subvariety $B$ of $A$. Let $L=\ker(\alpha)\cap B$ and let $K=\ker(\alpha)/L$. Then the isogeny $A\to A'$ factors through $A_1=A/L$. Let $r=\rank(L)$. We can find another isogeny $A_2\to A'$ with kernel rank $r$ such that the fiber product $A_1\times_{A'} A_2$ factors through $A$ (it suffices to lift $\pi_1(A_1)\to L$ to some $\pi_1(A')\to L'$ with $\rank(L')=r$). By Lemma \ref{lem:ess.dim abelian cover}, we have $\ed(A_2/A')\le r$, hence by By Lemma \ref{lem:ess.dim subadditive} we only need to show that 
\[\ed(A_1/A')\le \dim A -\dim B.\] 
To see this, let $A_0:=A/B$ and let $A'_0:=A_0/K\cong A'/\alpha(B)$. 
\[
\xymatrix{
A_1=A/L \ar[r] \ar[d] & A' \ar[d] \\
A_0 = A/B \ar[r] & A'_0 \\
}
\]
Then the natural map $A_1\to A'\times_{A'_0} A_0$ is an isomorphism: in fact, its kernel is the same as the kernel of $K\cong\ker(A_1\to A')\to A_0$, but the latter map is injective by construction. It follows that $\ed(A_1/A')\le \dim A_0=\dim A -\dim B$ and this gives the desired inequality.

For the reverse direction, we may assume that $\ed(\alpha)<\dim A$ (otherwise there is nothing to prove). By Theorem \ref{thm:ess.dim.}, there exists some abelian subvariety $B$ of $A$ such that
\[
\ed(\alpha)\ge \dim A -\dim B + \tfrac{p-1}{p}\rank_p (L)
\]
for every prime $p$, where $L=\ker(\alpha)\cap B$ as before. In particular, $ \frac{p-1}{p}\rank_p (L) \le \dim B-1$ (since $\ed(\alpha)<\dim A$). Since $\deg(\alpha)=|L|$ is coprime to $(\dim A)!$, there exists some prime $p>\dim A\ge \dim B$ such that $\rank_p (L) = \rank (L)$. For this $p$ we get
\[
\rank (L) = \rank_p (L)\le \tfrac{p}{p-1}(\dim B-1) < \dim B<p,
\]
hence $ \lceil\frac{p-1}{p}\rank_p (L) \rceil = \rank (L)$. This finishes the proof.
\end{proof}

\begin{cor}
Let $\alpha\colon A\to A'$ be an isogeny of simple abelian varieties whose degree is relatively prime to $(\dim A)!$. Then
$
\ed(\alpha) = \min\{\dim A, \rank(\ker(\alpha))\}.
$
\end{cor}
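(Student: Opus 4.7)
The plan is to apply Corollary~\ref{cor:formula} directly, exploiting the strong restriction that simplicity puts on the set of abelian subvarieties of $A$. Since the degree of $\alpha$ is coprime to $(\dim A)!$, the hypothesis of Corollary~\ref{cor:formula} is met, so
\[
\ed(\alpha) = \min_{B}\bigl(\dim A - \dim B + \rank(\ker(\alpha)\cap B)\bigr),
\]
where $B$ runs over all abelian subvarieties of $A$.

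Next I would use that $A$ is simple, so the only abelian subvarieties of $A$ are $B = 0$ and $B = A$. For $B = 0$, one has $\ker(\alpha)\cap B = 0$ and the quantity inside the minimum equals $\dim A$. For $B = A$, one has $\ker(\alpha)\cap B = \ker(\alpha)$ and the quantity inside the minimum equals $\rank(\ker(\alpha))$. Taking the minimum of these two values yields
\[
\ed(\alpha) = \min\bigl\{\dim A,\ \rank(\ker(\alpha))\bigr\},
\]
as claimed.

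There is no real obstacle here: the corollary is essentially a direct specialization of Corollary~\ref{cor:formula} to the case when $A$ has no nontrivial proper abelian subvarieties, so the minimum degenerates to a two-term comparison. The only thing worth verifying is that the hypothesis $\gcd(\deg\alpha,(\dim A)!) = 1$ is inherited unchanged, which is immediate from the statement.
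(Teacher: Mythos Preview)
Your proof is correct and follows essentially the same route as the paper: once $A$ is simple, the only abelian subvarieties are $0$ and $A$, and the result drops out. The paper phrases this as a consequence of Theorem~\ref{thm:ess.dim.}, whereas you invoke Corollary~\ref{cor:formula} directly; since Corollary~\ref{cor:formula} already packages the rounding argument and the upper bound, your version is slightly cleaner, but the substance is identical.
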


\begin{proof}
Since $A$ is simple, the only abelian subvarieties are the trivial ones, hence the statement is clear from Theorem \ref{thm:ess.dim.}.
\end{proof}

We conclude with some questions and remarks.  \cite{FKW-ess-dim-prismatic}*{Prop.2.3.10}  proves a more general result about the essential dimensions of mod $p$ homology covers. It would be interesting to find a geometric version of their result. A natural geometric replacement of the mod $p$ homology cover is the \'etale cover pulled back from the $p$-multiplication map on the albanese variety, and one may guess (as a geometric analog of \emph{loc. cit.}) that the essential dimension of this cover is at least the albanese dimension of the variety. However, the following example shows that this is not always true.

\begin{expl}
Let $A$ be an abelian variety of dimension $n$ and let $C$ be a smooth projective curve with a faithful $G:=(\bZ/p)^{2n}$-action such that $C/G\cong \bP^1$. By identifying $G$ with $A[p]$ (which acts on $A$ via translation), we get a diagonal $G$-action on $A\times C$ without fixed point. Let $X:=(A\times C)/G$ and consider the (\'etale) cover $f\colon A\times C\to X$. We have a Cartesian diagram
\[
\xymatrix{
A\times C \ar[r] \ar[d] & X \ar[d] \\
A \ar[r] & A/G
}
\]
where the vertical arrows are induced by the natural projections and the bottom row can be identified with the $p$-multiplication map on $A\cong A/G$. We claim that the map $X\to A':=A/G$ is the albanese morphism of $X$. For this it suffices to show that the induced map $H_1(X,\bZ)_{\tf}\to H_1(A',\bZ)$ between the torsion free part of the integral homology is an isomorphism. 

Observe that the projection $X\to A'$ is a smooth fibration with (connected) fiber $C$, hence the induced map $\pi_1(X)\to \pi_1(A')$ is surjective by the homotopy exact sequence. This implies that the map $H_1(X,\bZ)\to H_1(A',\bZ)$ is surjective as well. It remains to show that $b_1(X)=b_1(A')=2n$. By Hodge theory, this is also equivalent to $h^{1,0}(X)=n$, or $\dim H^0(\Omega^1_{A\times C})^G = n$. This can now be checked with a direct calculation, noting that by K\"unneth formula $H^0(\Omega^1_{A\times C})\cong H^0(\Omega^1_A)\oplus H^0(\Omega^1_C)$, the $G$-action is trivial on the first factor, while $H^0(\Omega^1_C)^G =H^0(\Omega_{C/G}^1)=H^0(\Omega_{\bP^1}^1)=0$. Thus we see that $X\to A'$ is the albanese morphism and the albanese dimension of $X$ is equal to $n=\dim A'$. The cover $f\colon A\times C\to X$  is therefore the pullback of the $p$-multiplication map on the albanese variety by the previous diagram. We also have another Cartesian diagram
\[
\xymatrix{
A\times C \ar[r] \ar[d] & X \ar[d] \\
C \ar[r] & C/G
}
\]
similar to the one we have before. It gives $\ed(f)\le \dim C =1$ and in particular the essential dimension $\ed(f)$ can be smaller than the albanese dimension. 
\end{expl}

Recall that a dominant morphism $f\colon X\to X$ of normal, projective  varieties is called a {\it polarized} (resp. {\it int-amplified}) endomorphism if there exists some ample line bundle $L$ on $X$ such that $f^*L=qL$ some integer $q>1$ (resp. $f^*L-L$ is ample). Clearly, every polarized endomorphism is int-amplified. The simplest example is the multiplication-by-$p$ map on abelian varieties. Similarly, on a  normal toric variety,  the  $p$th power  map extends from the open torus to a finite morphism of the toric variety.  There are many other examples such as finite self-maps of $\bP^n$ (or of any variety of Picard number $1$). In positive characteristics there are also the Frobenius maps. We proved that the multiplication maps on abelian varieties are incompressible. The $p$th power maps on toric varieties are even  $p$-incompressible for all $p$, see  \cite{RY-ess-dim-fix-point-method}*{Theorem 1.2} or \cite{BF-incompressibility}*{Proposition 10}. Motivated by these results, it seems natural to ask:

\begin{que}
Is every polarized endomorphism incompressible?
\end{que}

Fakhruddin pointed out that int-amplified endormorphisms in characteristic zero and polarized endomorphisms in positive characteristic can have small essential dimension. To get an int-amplified example, choose pairwise coprime integers $a_1,\cdots,a_n>1$, let $X = (\bP^1)^n$ and consider the endomorphism $f\colon X\to X$ 
given coordinate-wise by the map $x \mapsto x^{a_i}$ on the $i$-th factor. Then $f$ is int-amplified but its essential dimension is one since its Galois group is cyclic. To get a polarized example in characteristic $p$, consider the endomorphism $g\colon X\to X$ given by the Artin–Schreier cover $x \mapsto x^p-x$ on each product factor. The Galois group is $(\bZ/p)^n$ which has essential dimension one by \cite{Led-ess-dim-1}. By \cite{BR-ess-dim}*{Theorem 3.1(c)}, this implies that $\ed(g)=1$. For the same reason, any isogeny of abelian varieties with kernel $(\bZ/p)^n$ in characteristic $p$ has essential dimension one; therefore, the statement of Corollary \ref{cor:formula} does not hold in positive characteristic in general.

One limitation of our current approach is that it does not give much information about essential dimensions of morphisms between varieties of general type. It would be interesting to extend our geometric arguments to this more general setting. Such an extension may also be useful in addressing the $p$-incompressibility part of Brosnan's conjecture.

\section{Other results on group actions}\label{sec.5}

The essential dimension $\ed(G)$ of a finite group $G$ is related to Cremona groups containing $G$. For example, \cite{Ser-cremona}*{Th\'eor\`eme 3.6}  determines $\ed(A_6)$ using the classification of finite subgroups of the plane Cremona group.

More generally, one can ask which groups can act on
$n$-dimensional rationally connected  varieties.
For $S_m$ and $A_m$ actions, 
\cite{esser2023symmetries} uses  $p$-subgroups for some
$p>n+1$ to prove that  $m\leq n^2+Cn$.
\cite{esser2023symmetries} also gives examples where $m>n+\sqrt{2n}$.

Using Corollary~\ref{cor:abelian action on RC}, we can use  subgroups  $(\z/2)^{2n+1}\subset S_{4n+2}$ and $(\z/2)^{2n+1}\subset A_{4n+4}$, which give  the following   bounds.

\begin{cor} \label{cor.4.1}
  Assume that  $S_m$ (resp.\ $A_m$)   acts faithfully on a  smooth, projective, rationally connected variety $X$ of dimension $n$ over a field of characteristic $0$. Then  $m\leq 4n+1$  (resp.\ $m\leq 4n+3$).
  \qed
  \end{cor}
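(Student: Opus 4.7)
The plan is to combine the rank bound of Corollary~\ref{cor:abelian action on RC} at the prime $p=2$ with the existence of large elementary abelian $2$-subgroups in $S_m$ and $A_m$ generated by disjoint transpositions. After passing to an equivariant resolution we may assume the action on $X$ is regular; the restriction of a faithful $S_m$- or $A_m$-action to any subgroup is again faithful, so it suffices to find a sufficiently large elementary abelian $2$-subgroup and then apply Corollary~\ref{cor:abelian action on RC}.

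First I would handle the $S_m$ case. If $m\geq 4n+2$, then inside $S_m$ the disjoint transpositions $(1,2),(3,4),\dots,(4n+1,4n+2)$ generate a subgroup isomorphic to $(\z/2)^{2n+1}$. This abelian $2$-group acts faithfully on $X$, so Corollary~\ref{cor:abelian action on RC} with $p=2$ forces
\[
2n+1=\rank\bigl((\z/2)^{2n+1}\bigr)\leq \tfrac{2}{2-1}\cdot n = 2n,
\]
a contradiction. Hence $m\leq 4n+1$.

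For the $A_m$ case I would produce an even larger elementary abelian $2$-subgroup using the intersection of the ``disjoint transposition'' subgroup of $S_{4n+4}$ with $A_{4n+4}$. Concretely, if $m\geq 4n+4$, then $A_m\supseteq A_{4n+4}$, and the $2n+2$ disjoint transpositions $(1,2),(3,4),\dots,(4n+3,4n+4)$ generate a copy of $(\z/2)^{2n+2}$ in $S_{4n+4}$. Its intersection with $A_{4n+4}$ consists of the products of an even number of these transpositions, and is the kernel of the sum-of-coordinates homomorphism $(\z/2)^{2n+2}\to \z/2$. This kernel is isomorphic to $(\z/2)^{2n+1}$, giving a faithful $(\z/2)^{2n+1}$-action on $X$. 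Again Corollary~\ref{cor:abelian action on RC} yields $2n+1\leq 2n$, a contradiction, so $m\leq 4n+3$.

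There is no substantial obstacle here: once one has Corollary~\ref{cor:abelian action on RC} at $p=2$, the only non-trivial step is locating the rank-$(2n+1)$ elementary abelian $2$-subgroup inside $A_{4n+4}$, which is handled by the parity trick above. The sharpness of the numerical constants reflects the fact that the $p=2$ case of Corollary~\ref{cor:abelian action on RC} is itself sharp, as shown in Example~\ref{xe.2}.
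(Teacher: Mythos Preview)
Your argument is correct and is exactly the approach taken in the paper: apply Corollary~\ref{cor:abelian action on RC} at $p=2$ to the elementary abelian $2$-subgroups $(\z/2)^{2n+1}\subset S_{4n+2}$ and $(\z/2)^{2n+1}\subset A_{4n+4}$, the latter obtained precisely via the parity trick you describe. The remark about passing to an equivariant resolution is unnecessary here since $X$ is already smooth and projective, but it does no harm.
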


For local rings we get the following variant of the results in Section \ref{sec.3}.

\begin{cor} 
Let $R$ be the local ring $($or the completion of such$)$ of a closed point on a variety of dimension $n$ with rational singularities over a field of characteristic 0, and $G$ an abelian  $p$-group acting on $R$.
Then there is a subgroup $H\le G$ such that
  $H$ has a fixed point on every $G$-equivariant resolution of $\spec R$, and
  $$
  \log_p [G:H]\leq \tfrac{n-1}{p-1}.
  $$
  Therefore
  $$
  \rank (G)\leq n+\tfrac{n-1}{p-1}\leq 2n-1.
  $$
\end{cor}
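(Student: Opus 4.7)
The plan is to adapt the Riemann--Roch argument of Proposition~\ref{prop.1} to the local setting by working on a $G$-equivariant log resolution $\pi\colon \tilde X\to \Spec R$ and using the exceptional fiber $E:=\pi^{-1}(\fm)$ in place of a projective variety. Since the $G$-action on $R$ is local, it fixes $\fm$, so every nontrivial $G$-orbit on $\tilde X$ is supported on $|E|$. Let $p^c$ denote the smallest such orbit size, and let $H\le G$ be the stabilizer of a point in a minimal orbit, so that $H$ has a fixed point on $\tilde X$ and $\log_p[G:H]=c$. The asserted resolution-independence of $H$ will follow by a standard dominance argument: any two $G$-equivariant resolutions are $G$-equivariantly dominated by a third, and an $H$-fixed point on the refinement descends to both.

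To bound $c\le (n-1)/(p-1)$ I would apply Grothendieck--Riemann--Roch to $\pi$ with the equivariant coherent sheaf $\cO_E$. Since $R\pi_*\cO_E$ is a skyscraper $k$-module of length $\chi(E,\cO_E)=1$ (the equality coming from $R^i\pi_*\cO_{\tilde X}=0$ for $i>0$, which via the theorem on formal functions combined with the connectedness of $E$ forces $H^i(E,\cO_E)=0$ for $i>0$ and $H^0(E,\cO_E)=k$), GRR yields
\[
1=\bigl[\mathrm{ch}(\cO_E)\cdot \mathrm{td}(T_{\tilde X})\bigr]_n\in A_0(\tilde X)_\bQ,
\]
interpreted as the degree of a $0$-cycle supported on $|E|$. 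Now $\mathrm{ch}(\cO_E)$ begins in codimension $\ge 1$, so the degree-$n$ piece is a sum of products $\mathrm{ch}_i(\cO_E)\cdot \mathrm{td}_j(T_{\tilde X})$ with $i+j=n$ and $i\ge 1$. The $p$-adic denominator of each summand is at most $p^{\nu_p(i!)+\lfloor j/(p-1)\rfloor}$, and since $\nu_p(i!)\le (i-1)/(p-1)$ for $i\ge 1$, the total denominator is bounded by $p^{\lfloor(n-1)/(p-1)\rfloor}$; this is precisely the source of the saving $n\mapsto n-1$ in the exponent.

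On the smooth space $\tilde X$ the equivariant intersection theory of Paragraph~\ref{eit.s} applies to the abelian $p$-group $G$, so this $0$-cycle may be represented by a $G$-invariant $0$-cycle on $|E|$ whose integer numerator, being a sum of $G$-orbit sizes, is divisible by $p^c$. Clearing the denominator in the equation $[\cdots]_n=1$ thus forces $c\le (n-1)/(p-1)$, as claimed. The rank bound is then immediate from the fact that $H$ acts faithfully on $T_x\tilde X\cong k^n$ at its fixed point (since $\chr k=0$), yielding $\rank(H)\le n$ and hence
\[
\rank(G)\le \rank(H)+\log_p[G:H]\le n+\tfrac{n-1}{p-1}\le 2n-1,
\]
with equality at $p=2$.

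The main obstacle is the middle step: making the denominator estimate on $\mathrm{ch}(\cO_E)\cdot \mathrm{td}(T_{\tilde X})$ rigorous when $E$ is a typically non-reduced, reducible and singular closed subscheme of $\tilde X$, and ensuring that GRR really returns $\chi(E,\cO_E)=1$ on the nose. The cleanest handling uses Baum--Fulton--MacPherson singular Riemann--Roch applied to $E\hookrightarrow \tilde X$; alternatively, choosing a log resolution so that the reduced exceptional divisor is SNC allows one to compute $\mathrm{ch}(\cO_E)$ by inclusion--exclusion over the smooth strata and then apply ordinary smooth Riemann--Roch on each, with the codimension-$\ge 1$ shift preserved throughout.
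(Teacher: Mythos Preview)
Your primary route---GRR for $\cO_E$ on the resolution $\tilde X$---has real gaps, some of which you flag but none of which you close. Most seriously, $\tilde X$ is not projective (it resolves a local scheme), whereas the equivariant intersection theory of Paragraph~\ref{eit.s} is proved using $G$-linearized very ample line bundles on a projective variety; your appeal to it for representing the degree-$n$ class by a $G$-invariant $0$-cycle on $|E|$ is therefore unjustified. Second, your argument for $\chi(E,\cO_E)=1$ is incorrect: the theorem on formal functions identifies $(R^i\pi_*\cO_{\tilde X})^\wedge_{\fm}$ with an inverse limit over \emph{all} infinitesimal thickenings of the fiber, and vanishing of that limit does not force $H^i$ to vanish on the scheme-theoretic (or reduced) fiber itself. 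Third, Paragraph~\ref{eit.s} only treats Chern numbers of $T_X$, not $\mathrm{ch}$ of an arbitrary $G$-sheaf like $\cO_E$.

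The paper's proof is exactly your ``alternative'', carried out so that these issues never arise. One takes a $G$-equivariant log resolution $Y\to\Spec R$ whose \emph{reduced} exceptional divisor $E$ is simple normal crossing; then $\chi(E,\cO_E)=1$ is a theorem of Steenbrink for rational singularities. Writing $E^m$ for the disjoint union of the $m$-fold intersections of the components of $E$, the exact sequence $0\to\cO_E\to\pi^1_*\cO_{E^1}\to\cdots\to\pi^n_*\cO_{E^n}\to 0$ gives
\[
1=\chi(E,\cO_E)=\tsum_{m=1}^{n}(-1)^{m-1}\chi(E^m,\cO_{E^m}),
\]
so some $\chi(E^m,\cO_{E^m})$ is coprime to $p$. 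But $E^m$ is smooth, projective, of pure dimension $n-m\le n-1$, and $G$-stable, so Proposition~\ref{prop.1} applies to it directly, producing $H\le G$ with an $H$-fixed point on $E^m$ (hence on $Y$) and $\log_p[G:H]\le(n-m)/(p-1)\le(n-1)/(p-1)$. The saving $n\mapsto n-1$ thus comes simply from $\dim E^m\le n-1$, not from your Chern-character denominator estimate, and all intersection theory stays on smooth projective schemes where Paragraph~\ref{eit.s} already applies.
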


\begin{proof}
Let $Y\to \spec R$ be a $G$-equivariant resolution such that
the reduced preimage of the closed point is a simple normal crossing divisor $E\subset Y$.  Note that $\chi(E, \o_E)=1$ by \cite{steenbrink} since 
$R$ has rational singularities.

Let $\{E_i:i\in I\}$ be the irreducible components of $E$. For $1\leq m\leq n$ set
$$
E^{m}:=\textstyle{\amalg}_{J\subset I, |J|=m} \cap_{i\in J} E_i,
$$
with projection $\pi^m:E^m\to E$. 
There is an exact sequence
$$
0\to \o_E\to \pi^1_*\o_{E^1}\to \pi^2_*\o_{E^2}\to \cdots \to \pi^n_*\o_{E^n}\to 0,
$$
therefore
$$
1=\chi(E, \o_E)=\tsum_{m=1}^n (-1)^{m-1} \chi(E^m, \o_{E^m}).
$$
Thus at least one of the $\chi(E^m, \o_{E^m}) $ is not divisible by $p$.
We can apply Proposition~\ref{prop.1} to that $E^m$ to get $H\le G$ with an $H$-fixed point on $E^m$.  Its image by $\pi^m$ is an $H$-fixed point on $Y$.
\end{proof}

\begin{rem} \label{CY.rem}
  Let $G$ be an abelian  $p$-group acting faithfully on a  smooth, projective variety $X$ of dimension $n$ over a field of characteristic $0$.
Assume that $\chi(X, \o_X)= \pm 1$ or $\pm 2$. Then Corollary~\ref{rank.cor} implies that 
\[
\rank G \leq
\left\{
\begin{array}{l} \tfrac{p}{p-1}n \qtq{if $p$ is odd, and}\\[1ex]
  2n+1 \qtq{if $p=2$.}
\end{array}
\right.
\eqno{(\ref{CY.rem}.1)}
\]
This covers many even dimensional Calabi-Yau varieties.
However, if $X$ is a 3-dimensional Calabi-Yau variety, then
$\chi(X, \o_X)=0$, and our methods give no bound.

Nonetheless, Moraga conjectures that the bounds (\ref{CY.rem}.1) hold in odd dimensions as well.

Note that the bounds (\ref{CY.rem}.1) are optimal in certain cases. Dolgachev pointed out that the abelian $2$-group of rank $5$ acts on the K3 surface given by the intersection of three diagonal quadrics in $\bP^5$.

\end{rem}

In another direction,  \cite{MR4069653} shows that an
abelian $p$-group action can not have a unique fixed point.
For these purposes,  \cite{MR4069653}
develops  a generalization of the Rost invariant \cite{rost-deg}. Here we give an elementary proof.

\begin{prop} Let $G$ be a reductive, abelian group acting on a proper variety $X$ over an algebraically closed field $K$.
  Assume that the component group $G/G_0$ is a $p$-group, $\mathrm{char}(K)\neq p$, $G$ has a smooth, isolated fixed point and $\dim X\geq 1$. Then $G$ has at least 1 other fixed point.
\end{prop}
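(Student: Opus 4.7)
The strategy is to argue by contradiction, assuming $X^G = \{x\}$, and to induct on $\dim X$. After passing to a $G$-equivariant resolution of singularities (which is an isomorphism near the smooth point $x$, so any additional fixed point on the resolution would produce a fixed point of $X$ distinct from $x$), I may assume $X$ is smooth and projective. By Luna's slice theorem, equivalently by Bochner linearization of the reductive action at a smooth fixed point, there is a $G$-equivariant \'etale neighborhood of $x$ isomorphic to the linear representation $T_xX$. Let $\chi_1,\dots,\chi_n\in\widehat{G}$ denote the weights of $G$ on $T_xX$; each is a nontrivial character since $x$ is isolated in $X^G$.

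For the inductive step, pick a single weight $\chi:=\chi_1$, set $H:=\ker(\chi)\subsetneq G$, and consider the closed $G$-invariant subvariety $Y:=X^H\subseteq X$. Near $x$ the local model identifies $Y$ with $(T_xX)^H=\bigoplus_{\chi_i|_H=1}V_{\chi_i}\supseteq V_\chi$, so $Y$ is smooth at $x$ with $\dim_x Y\ge 1$, and $Y^G\subseteq X^G=\{x\}$. If $\dim Y<\dim X$, the induction hypothesis applied to the proper $G$-variety $Y$ with its smooth isolated fixed point $x$ supplies a second $G$-fixed point on $Y\subseteq X$, contradicting $X^G=\{x\}$.

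The induction bottoms out when $Y=X$, i.e.\ when every $\chi_i$ is trivial on $H$ and $G$ acts on $X$ through the cyclic quotient $G/H\cong\chi(G)\subseteq K^\times$. If $\chi|_{G_0}$ is nontrivial then $G/H\cong\bG_m$, and the Bialynicki--Birula decomposition of the smooth projective $X$ with $x$ as its only $\bG_m$-fixed point forces $X$ to be the affine attracting cell at $x$, contradicting properness. Otherwise $\chi|_{G_0}$ is trivial, $G/H$ is a cyclic $p$-group $\bZ/p^k$, and lifting a generator to $g\in G$, each weight factors through $G/H$ with $\chi_i(g)=\zeta^{a_i}$ for a primitive $p^k$-th root of unity $\zeta\in K$ and $a_i\not\equiv 0\pmod{p^k}$.

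In this last subcase I would apply the Atiyah--Bott holomorphic Lefschetz formula to $\cO_X$ on the smooth projective $X$ with the unique isolated nondegenerate $G/H$-fixed point $x$. Evaluating the resulting localization identity at $g$ inside the domain $\bZ[\zeta_{p^k}]$ and clearing the nonzero denominator $\prod_i(1-\zeta^{a_i})$ produces
\[
\chi^{G/H}(X,\cO_X)(g)\cdot\prod_{i=1}^{n}(1-\zeta^{a_i}) \;=\; 1 \quad \text{in } \bZ[\zeta_{p^k}].
\]
The left factor is a character value of a virtual $G/H$-representation and so lies in $\bZ[\zeta_{p^k}]$; Brauer's identification $R_K(\bZ/p^k)\cong R_{\bZ}(\bZ/p^k)=\bZ[t]/(t^{p^k}-1)$ makes this integrality uniform in $\mathrm{char}(K)$ whenever it is coprime to $p$. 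Consequently $\prod_i(1-\zeta^{a_i})$ would have to be a unit in $\bZ[\zeta_{p^k}]$; but each factor lies in the unique prime $\mathfrak{p}\mid p$, so the product has $\mathfrak{p}$-adic valuation at least $n\ge 1$, the desired contradiction. The main obstacle will be organizing the case analysis so that the induction terminates cleanly at a cyclic or torus quotient, and verifying the characteristic-free integrality on which the final numerical contradiction rests.
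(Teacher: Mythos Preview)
Your argument is sound in characteristic $0$, but there is a genuine gap in positive characteristic: the proposition is stated over an arbitrary algebraically closed field $K$ with $\mathrm{char}(K)\neq p$, yet your first step passes to a $G$-equivariant resolution of singularities, which is not known when $\mathrm{char}(K)>0$ beyond low dimensions. Everything downstream---smoothness of $X^{H}$, the Bia\l{}ynicki--Birula cell decomposition, and the Atiyah--Bott/Woods~Hole formula at a simple isolated fixed point---relies on $X$ being globally smooth, so the gap cannot be removed simply by dropping the resolution step.

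The paper's proof, by contrast, is elementary and uniform in the characteristic, and never needs $X$ smooth away from $x$. In dimension $1$ one normalizes, lifts a generator of $m_x/m_x^2$ to a $G$-eigensection $s\in H^0(X,L^p)$ for a very ample $G$-linearized $L$, and notes that the $G$-invariant $0$-cycle $(s=0)$ has degree divisible by $p$ while $\{x\}$ appears with multiplicity one; hence some other $G$-orbit in its support has $p$-prime size, i.e.\ is a second fixed point. In higher dimensions one lifts a $G$-eigenvector in $m_x/m_x^2$ to a section of $L$ and recurses on the component through $x$ of the resulting $G$-invariant hypersurface, which is smooth at $x$. Your route via the weight kernel $H=\ker\chi$, the fixed locus $X^{H}$, and the Lefschetz/valuation computation gives a nice structural picture in characteristic $0$, but it trades away both the elementary nature of the argument and the positive-characteristic case that the statement covers; the paper's stated purpose here is precisely to replace the cohomological machinery of \cite{MR4069653} by a linear-system argument.
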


Proof.  Assume first that $\dim X=1$. 
After normalization, we may assume that $X$ is smooth.
Let $x\in X$ be the fixed point.
Let $L$ be a very ample, $G$-linearized line bundle on $X$.
Then
$$
H^0(X, L^p)\onto  H^0(X, L^p\otimes \o_X/m_x^2) \qtq{is surjective.}
$$
Since $G$ is reductive, any element in $m_x /m_x^2$
 lifts  to a $G$-equivariant section
 $s\in H^0(X, L^p)$.
 Then $D:=(s=0)$ is a 0-dimensional $G$-scheme whose degree is divisible by $p$,
 and $\{x\}\subset D$ is an irreducible component of degree 1.
 Thus $D$ has another irreducible component that is fixed by $G$ and
 whose degree is not divisible by $p$.
 This gives another fixed point.

For $\dim X>1$ we use induction on the dimension.
We may assume that $X$ is projective.
Let $L$ be a very ample, $G$-linearized line bundle on $X$.
Then
$$
H^0(X, L)\onto  H^0(X, L\otimes \o_X/m_x^2) \qtq{is surjective.}
$$
Since $G$ is reductive and  abelian, it has an eigenvector on $m_x/m_x^2$,
which lifts  to a $G$-equivariant section
$s\in H^0(X, L)$. Thus $D:=(s=0)$ is $G$-invariant and
$x\in D$ is a smooth, isolated fixed point.
There is a unique irreducible component $x\in D'\subset D$.
By induction, $G$ has  at least 1 other fixed point on $D'$,  hence also on $X$. \qed



\bibliography{ref}

\end{document}